\newtheorem{thm}{Theorem}
\newtheorem{prop}{Proposition}
\newtheorem{defn}{Definition}
\newtheorem{cor}{Corollary}
\newtheorem{rem}{Remark}
\newtheorem{lem}{Lemma}
\begin{document}
\title{\textbf{Constant scalar curvature K\"{a}hler metric and K-energy}}
\author{Chi Li}
\date{}
\maketitle
\begin{abstract}
\noindent ABSTRACT: Based on Donaldson's method, we prove that, for
an integral K\"{a}hler class, when there is a K\"{a}hler metric of
constant scalar curvature, then it minimizes the K-energy. We do not
assume that the automorphism group is discrete.
\end{abstract}
\begin{section}{Introduction}
Let $X$ be a compact K\"{a}hler manifold of dimension $n$ and fix a
K\"{a}hler class $[\omega]$ on $X$. Define the K\"{a}hler potential
space
\begin{equation}\label{potential}
\mathcal{K}:=\{\phi\;;\;
\omega+\frac{\sqrt{-1}}{2\pi}\partial\bar{\partial}\phi>0\}
\end{equation}
$\overline{\mathcal{K}}:=\mathcal{K}/\mathbb{R}$ is the space of
K\"{a}hler metrics in $[\omega]$. The K-energy functional is defined
by Mabuchi \cite{Ma1}
\begin{defn}
For any $\phi\in\mathcal{K}$, let
$\omega_\phi=\omega+\frac{\sqrt{-1}}{2\pi}\partial\bar{\partial}\phi\in\overline{\mathcal{K}}$,
define
\[
\nu_\omega(\omega_\phi)=-\frac{1}{V}\int_0^1dt\int_X(S(\omega_{\phi_t})-\underline{S})\frac{d\phi_t}{dt}\;\omega_{\phi_t}^n
\]
$\phi_t$ is any path connecting 0 and $\phi$ in $\mathcal{K}$.
$S(\omega_\phi)$ denotes the scalar curvature of K\"{a}hler metric
$\omega_\phi$, and
\[
\underline{S}=\frac{1}{V}\int_XS(\omega)\omega^n=\frac{nc_1(X)\cdot
[\omega]^{n-1}}{[\omega]^n},\quad V=\int_X\omega^n
\]
is the average of scalar curvature, which is independent of chosen
K\"{a}hler metric in $[\omega]$.
\end{defn}
The K-energy is well defined, i.e. it does not depend on the path
connecting 0 and $\phi$. In particular, we can take $\phi(t)=t\phi$.
A K\"{a}hler metric of constant scalar curvature is a critical point
of K-energy and it is a local minimizer.

Now assume the K\"{a}hler class is $c_1(L)\in H^2(X,\mathbb{Z})\cap
H^{1,1}(X,\mathbb{R})$ for some ample line bundle $L$ over $X$.

In this note, we prove the following theorem:
\begin{thm}\label{main}
Suppose that there is a metric $\omega_\infty$ of constant scalar
curvature in the K\"{a}hler class $c_1(L)$. Then $\omega_\infty$
minimizes the K-energy in this K\"{a}hler class.
\end{thm}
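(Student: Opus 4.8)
The plan is to follow Donaldson's quantization philosophy: replace the infinite-dimensional variational problem for the K-energy by a sequence of \emph{finite-dimensional convex} problems on symmetric spaces, where the minimizing property is transparent, and then pass to the semiclassical limit $k\to\infty$. For each large $k$ the line bundle $L^k$ is very ample and gives a Kodaira embedding $X\hookrightarrow\mathbb{P}(H^0(X,L^k)^{*})\cong\mathbb{P}^{N_k}$. The space $\mathcal{H}_k$ of Hermitian inner products on $H^0(X,L^k)$ is the symmetric space $GL(N_k+1,\mathbb{C})/U(N_k+1)$, which is nonpositively curved and geodesically complete, its geodesics being the one-parameter subgroups $t\mapsto e^{tA}$ with $A$ Hermitian. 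Two maps connect this finite model to $\mathcal{K}$: the map $\mathrm{Hilb}_k$ sending a potential $\phi$ (with its induced metric on $L$) to the $L^2$ inner product it defines on $H^0(X,L^k)$, and the map $\mathrm{FS}_k$ sending an inner product to the pulled-back Fubini--Study potential.

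Next I would bring in Donaldson's functional $\mathcal{L}_k$ on $\mathcal{H}_k$ --- a combination of a $\log\det$ term with a Monge--Amp\`ere (Aubin--Yau) type term --- whose first variation is the balancing tension $\mu_k(h)-\frac{N_k+1}{V}\mathrm{Id}$, which is \emph{geodesically convex} along the one-parameter subgroups, and whose critical points are exactly the balanced metrics. The essential analytic input is the Tian--Yau--Zelditch--Catlin--Lu expansion of the Bergman kernel, $\rho_k(\phi)=k^n+\tfrac12 S(\omega_\phi)k^{n-1}+O(k^{n-2})$, refined by Donaldson, from which one extracts a constant $c_k>0$ so that, for smooth $\phi$, $c_k\,\mathcal{L}_k(\mathrm{Hilb}_k(\phi))$ converges, up to a $\phi$-independent constant, to the K-energy $\nu_\omega(\omega_\phi)$. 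Thus $\nu_\omega$ is realized as the semiclassical limit of the convex functionals $\mathcal{L}_k$.

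Now I would use the constant scalar curvature hypothesis. Since the subleading Bergman coefficient is $\tfrac12 S(\omega_\infty)$ and $S(\omega_\infty)\equiv\underline{S}$, the $k^{n-1}$ contribution to the balancing tension of $p_k:=\mathrm{Hilb}_k(\omega_\infty)$ cancels: $\omega_\infty$ is \emph{almost balanced}, with $\|\mathrm{grad}\,\mathcal{L}_k(p_k)\|$ decaying by an extra power of $k$ relative to a generic metric. For an arbitrary smooth competitor $\omega_\psi$, run the geodesic in $\mathcal{H}_k$ from $p_k$ to $q:=\mathrm{Hilb}_k(\psi)$; geodesic convexity and Cauchy--Schwarz give $\mathcal{L}_k(q)\geq\mathcal{L}_k(p_k)-\|\mathrm{grad}\,\mathcal{L}_k(p_k)\|\;d_{\mathcal{H}_k}(p_k,q)$. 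Multiplying by $c_k$, applying the expansion to both $c_k\mathcal{L}_k(q)$ and $c_k\mathcal{L}_k(p_k)$, and checking that the error $c_k\,\|\mathrm{grad}\,\mathcal{L}_k(p_k)\|\,d_{\mathcal{H}_k}(p_k,q)\to 0$, I obtain in the limit $\nu_\omega(\omega_\psi)\geq\nu_\omega(\omega_\infty)$, i.e.\ $\omega_\infty$ minimizes the K-energy.

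The crux --- and exactly where the lack of a discreteness hypothesis on $\mathrm{Aut}(X)$ enters --- is this final step. When $X$ carries holomorphic vector fields, $\mathcal{L}_k$ degenerates along the corresponding directions in $\mathcal{H}_k$, so genuine balanced metrics need not exist and Donaldson's convergence-of-balanced-metrics theorem is unavailable; I therefore cannot quote the existence of a finite-dimensional minimizer. The virtue of the argument above is that it never needs one: it runs entirely through the \emph{approximate} balancing of $p_k$ together with convexity. The price is that one must make two quantitative estimates compatible --- the decay rate of the balancing tension $\|\mathrm{grad}\,\mathcal{L}_k(p_k)\|$ (requiring Bergman asymptotics that are uniform and differentiated in the potential) against the growth rate of the symmetric-space distance $d_{\mathcal{H}_k}(p_k,q)$ (controlled via the eigenvalues of $p_k^{-1}q$) --- so that their product still vanishes after the rescaling by $c_k$. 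Securing this balance is the technical heart of the proof.
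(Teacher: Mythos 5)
Your proposal is correct and follows essentially the same route as the paper: quantize via $\mathrm{Hilb}$/$\mathrm{FS}$, use the Bergman kernel expansion plus the cscK hypothesis to show that $\mathrm{Hilb}_k(\omega_\infty)$ is almost balanced with gradient decaying one order faster than generic, exploit geodesic convexity of the finite-dimensional functional (in the paper this is the convexity of the log Chow norm, Kempf--Ness) along the geodesic joining it to the competitor, and check that the gradient decay beats the $O(k)$ eigenvalue bound on the geodesic displacement so that the error vanishes after rescaling by $c_k\sim (Vk^n)^{-1}$. The quantitative balance you single out as the technical heart is exactly the paper's Lemma \ref{step2}, where $|f_k'(0)|\le Ck^{-2}\cdot k\cdot N_k\le Ck^{n-1}$ yields the $-Ck^{-1}$ error term.
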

In the case of K\"{a}hler-Einstein metrics, this result was proved
by Bando and Mabuchi in \cite{Ba} and \cite{BM}. In \cite{Do2},
Donaldson proved the above theorem under the assumption that the
automorphism group $\mbox{Aut}(X,L)$ is discrete. The theorem was
proved for more general extremal K\"{a}hler metrics on any compact
K\"{a}hler manifolds by Chen-Tian in \cite{CT}, where the authors
used geodesics in infinite dimensional space of K\"{a}hler metrics
in $[\omega]$.

In this note, we use Donaldson's method to prove the theorem for
every integral class case.

The strategy to prove the theorem is finite dimensional
approximation. We sketch the idea here.

Tian's approximation theorem (Proposition \ref{bergexp} and
Corollary \ref{conver}) says that $\mathcal{K}$ can be approximated
by a sequence of finite dimensional symmetric spaces
$\mathcal{H}_k$. Here $\mathcal{H}_k\cong GL(N_k,\mathbb{C})/U(N_k)$
is the space of Hermitian metrics on the complex vector space
$H^0(X, L^k)$.

In \cite{Do2}, Donaldson defined a sequence of functional
$\mathcal{L}_k$ on $\mathcal{K}$ which approximate K-energy as
$k\rightarrow \infty$. When restricted to $\mathcal{H}_k$,
$\mathcal{L}_k$ is bounded below by the logarithmic of Chow norm. It
was known that balanced metric obtains the minimum of Chow norm
(\cite{Zh},\cite{Pa}). In \cite{Do1}, Donaldson already proved, in
the case of discrete automorphism group, existence of balanced
metrics which approximate K\"{a}hler metric of constant scalar
curvature. Putting these together, he can prove the theorem.

Mabuchi \cite{Ma2,Ma3,Ma4} extended many results of \cite{Do1} to
the case where the varieties have infinitesimal automorphisms. As
Mabuchi \cite{Ma6} showed, if the automorphism group is not
discrete, in general there will be no balanced metrics. Instead,
Mabuchi defined T-balanced metrics and T-stability with respect to
some torus group contained in $\mbox{Aut}(X)$. Donaldson claimed
\cite{Do2} one can use these new techniques to prove the above
theorem without assuming that the automorphism group is discrete.

In this note, we use the same quantization strategy. But we don't
need existence of balanced metrics or T-balanced metrics. Instead we
use simpler Bergman metrics constructed directly from
$\omega_\infty$. By suitably normalizing Bergman kernels, we show
that Bergman metrics are almost balanced in an asymptotical sense
(Proposition \ref{estiprop}), and they can help us to prove the
theorem. In this way, we don't need the restriction on the
automorphism group; moreover, our argument is more direct.

As can be seen from the following, the argument follows \cite{Do2}
closely. The idea of using almost balanced metrics is inspired by
works of Mabuchi. In particular, the proof of Lemma \ref{step2} is
inspired by the argument of \cite{Ma5}, page 13. See Remark
\ref{remMa}. As the above work shows, the convexity of various
functionals is the essential property behind the argument.

The author would like to thank: Professor Gang Tian for his constant
encouragement and help; Yanir Rubinstein for pointing out some
inaccuracies in the first version of the note, and for his
friendship and encouragement. The author would also like to thank
all the participants, particularly Yalong Shi, in the geometry
seminar in Peking University. Their patience and encouragement all
help the author to understand the stability condition and work of
Donaldson.
\end{section}
\begin{section}{Notations and preliminaries}
\begin{subsection}{Maps between $\mathcal{K}_k$ and $\mathcal{H}_k$}
We will use some definitions and notations from \cite{Do2}. The set
$\mathcal{K}$ defined in \eqref{potential} depends on reference
K\"{a}hler metric $\omega$. However in the following, we will omit
writing down this dependence, because it's clear that $\mathcal{K}$
is also the set of metrics $h$ on $L$ whose curvature form
\[c_1(L,h):=-\frac{\sqrt{-1}}{2\pi}\partial\bar{\partial}\log h\]
is a positive (1,1) form on $X$. Let $\mathcal{K}_k$ denote the set
of Hermitian metrics on $L^k$ with positive curvature form, then
$\mathcal{K}_k\simeq\mathcal{K}=\mathcal{K}_1$. Let
$N_k=\mbox{dim}\;H^0(X,L^k)$, $V=\int_Xc_1(L)^n$.
We have maps between $\mathcal{K}_k$ and $\mathcal{H}_k$.
\begin{defn}
\begin{eqnarray*}
\mbox{\upshape Hilb}:\mathcal{K}_k&\longrightarrow& \mathcal{H}_k\\
h_k&\mapsto& \|s\|_{\mbox{\scriptsize \upshape
Hilb}(h_k)}^2=\frac{N_k}{Vk^n}\int_X|s|_{h_k}^2\,c_1(L^k,h_k)^n,\quad\forall
s\in
H^0(X,L^k)\\
\mbox{\upshape FS}: \mathcal{H}_k&\longrightarrow& \mathcal{K}_k\\
H_k&\mapsto& |s|_{\mbox{\scriptsize\upshape
FS}(H_k)}^2=\frac{|s|^2}{\sum_{\alpha=1}^{N_k}|s_{\alpha}^{(k)}|^2},\quad\forall
s\in L^k
\end{eqnarray*}
In the above definition, $\{s_{\alpha}^{(k)};1\le\alpha\le N_k\}$ is
an orthonormal basis of the Hermitian complex vector space
$(H^0(X,L^k),H_k)$.
\end{defn}
\end{subsection}
\begin{subsection}{Bergman metrics, expansions of Bergman kernels}
For any fixed K\"{a}hler metric $\omega\in c_1(L)$, take a Hermitian
metric $h$ on $L$ such that $c_1(L,h)=\omega$, the $k$-th Bergman
metric of $h$ is
\[h_{k}=\mbox{\upshape FS}(\mbox{\upshape Hilb}(h^{\otimes k}))\in\mathcal{K}_k\]
Let $\{s_{\alpha}^{(k)},1\le\alpha\le N_k\}$ be an orthonormal basis
of $\mbox{\upshape Hilb}(h^{\otimes k})$. Define the $k$-th
(suitably normalized) Bergman kernel of $\omega$
\[
\rho_k(\omega)=\frac{N_kn!}{Vk^n}\sum_{\alpha=1}^{N_k}|s_{\alpha}^{(k)}|^2_{h^{\otimes
k}}
\]
Note that $h$ is determined by $\omega$ up to a constant, but
$\rho_k(\omega)$ doesn't depend on the chosen $h$.

The following proposition is now well known.
\begin{prop}[\cite{T1},\cite{Cat},\cite{Zel},\cite{Ru},\cite{Lu}]\label{bergexp}
\mbox{}\par
\begin{enumerate}
\item[\upshape (1)]\label{exp} For fixed $\omega$, there is an asymptotic expansion as
$k\rightarrow +\infty$.
\[
\rho_k(\omega)=A_0(\omega)+A_1(\omega)k^{-1}+\dots
\]
where $A_i(\omega)$ are smooth functions on $X$ defined locally by
$\omega$.
\item[\upshape{(2)}]\label{coe} In particular
\[
A_0(\omega)=1,\quad A_1(\omega)=\frac{1}{2}S(\omega)
\]
\item[\upshape (3)] The expansion holds in $C^{\infty}$ in that for any $r, N\ge
0$
\[
\left\|\rho_k(\omega)-\sum_{i=0}^NA_i(\omega)k^{-i}\right\|_{C^r(X)}\le
K_{r,N,\omega}k^{-N-1}
\]
for some constants $K_{r,N,\omega}$. Moreover the expansion is
uniform in that for any $r, N$, there is an integer $s$ such that if
$\omega$ runs over a set of metrics which are bounded in $C^s$, and
with $\omega$ bounded below, the constants $K_{r,N,\omega}$ are
bounded by some $K_{r,N}$ independent of $\omega$.
\end{enumerate}
\end{prop}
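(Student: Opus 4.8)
The plan is to reduce the statement to a local computation on the diagonal. Writing $B_k(x) = \sum_{\alpha=1}^{N_k}|s_\alpha^{(k)}(x)|^2_{h^{\otimes k}}$ for the diagonal Bergman kernel, the definition of $\rho_k$ gives $\rho_k(\omega) = \frac{N_k n!}{Vk^n} B_k$, and integrating over $X$ recovers $N_k = \dim H^0(X,L^k)$ by orthonormality, which pins down the normalization once the leading order of $B_k$ is identified. The starting point is the extremal characterization $B_k(x_0) = \sup\{|s(x_0)|^2_{h^{\otimes k}} : \int_X |s|^2_{h^{\otimes k}} = 1\}$, which localizes the problem near $x_0$. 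I would fix normal coordinates $z$ centered at $x_0$ and a local holomorphic frame $e_L$ with $|e_L|^2_h = e^{-\phi}$, where $\phi(z) = |z|^2 + O(|z|^3)$ and the higher Taylor coefficients of $\phi$ record the curvature invariants of $\omega$ at $x_0$. Rescaling $z = u/\sqrt{k}$ turns the weight $e^{-k\phi}$ into $e^{-|u|^2}$ to leading order, so the weighted space converges to the Bargmann--Fock space on $\mathbb{C}^n$, whose diagonal Bergman kernel is the explicit constant $\pi^{-n}$; matching constants yields $A_0(\omega) = 1$.

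For the full expansion I would treat the higher Taylor terms of $\phi$ perturbatively. Expanding $k\phi(u/\sqrt k) = |u|^2 + k^{-1/2}P_3(u) + k^{-1}P_4(u) + \cdots$, where $P_j$ is the degree-$j$ part, produces a formal series for $B_k$ in powers of $k^{-1/2}$ obtained by integrating the corrections against the Gaussian weight. The odd powers of $k^{-1/2}$ vanish by parity, since the cubic terms $P_3$ are odd and integrate to zero against $e^{-|u|^2}$ on the diagonal, leaving a genuine expansion in $k^{-1}$ as claimed in (1). The coefficient of $k^{-1}$ is governed by $P_4$ together with the square of $P_3$; here $P_4$ carries the full curvature tensor $R_{i\bar j k\bar l}$, and carrying out the finite collection of Gaussian moment integrals and tracing the curvature produces the scalar curvature, giving $A_1(\omega) = \frac12 S(\omega)$ as in (2).

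To make this rigorous and to obtain the $C^\infty$ statement (3) with uniformity, I would realize the perturbative construction through H\"{o}rmander's $L^2$-estimate for $\dbar$: at each order one builds approximate holomorphic sections from the model, corrects them to genuine sections of $L^k$ with exponentially small error, and assembles an approximate Bergman projector whose kernel agrees with $B_k$ to the required order. Differentiating these constructions in $x_0$ gives the $C^r$ bounds, and since only finitely many derivatives of $\omega$ (a number depending on $r$ and $N$) enter the Taylor data, the constants are uniform over families bounded in a fixed $C^s$ with $\omega$ bounded below. An alternative route is to pass to the unit circle bundle of $(L^*,h^*)$, which is strictly pseudoconvex because $\omega > 0$, identify $B_k$ with the $k$-th Fourier mode of the Szeg\H{o} kernel, and apply the Boutet de Monvel--Sj\"{o}strand parametrix followed by stationary phase in the fibre variable. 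The main obstacle is the coefficient identification $A_1 = \frac12 S(\omega)$: the existence and uniformity of the expansion follow rather formally from the localization, but extracting the precise constant requires a careful, though finite, bookkeeping of the Gaussian moments and the curvature contractions, and it is here that the normalization of $\rho_k$ must be tracked with care.
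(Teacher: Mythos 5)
The paper offers no proof of this proposition: it is quoted as a known theorem, with the proof delegated entirely to the cited references \cite{T1}, \cite{Cat}, \cite{Zel}, \cite{Ru}, \cite{Lu} (the uniformity clause in (3) is the form stated by Donaldson in \cite{Do1}, Proposition 6). So there is no internal argument to compare against, and your proposal must be judged as a reconstruction of the literature. As such it is a faithful and essentially correct outline of exactly those proofs: the extremal characterization of the diagonal Bergman function, normal coordinates and the $\sqrt{k}$-rescaling to the Bargmann--Fock model for $A_0$, the parity argument eliminating half-integer powers of $k$, the $P_4$ and $P_3^2$ Gaussian-moment bookkeeping that produces $A_1=\frac{1}{2}S(\omega)$ (this is Lu's contribution in \cite{Lu}), the H\"{o}rmander $L^2$ correction of approximate peak sections (Tian's method in \cite{T1}, refined in \cite{Ru}), and the alternative Szeg\H{o}-kernel route via the Boutet de Monvel--Sj\"{o}strand parametrix (\cite{Cat}, \cite{Zel}). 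You also correctly locate the two genuinely hard points: controlling the full projector (your ``approximate Bergman projector'' step, i.e.\ showing the true kernel is local modulo errors of all orders in $k^{-1}$, which is where most of the analysis in the references lives) and the coefficient identification. One caution specific to this paper's conventions, since your final sentence flags it: the normalization here is doubly nonstandard, because $\rho_k$ carries the prefactor $\frac{N_kn!}{Vk^n}$ while the orthonormal basis is taken with respect to $\mbox{Hilb}(h^{\otimes k})$, which itself carries the factor $\frac{N_k}{Vk^n}$ and uses the volume form $c_1(L^k,h^{\otimes k})^n=k^n\omega^n$ rather than $\omega^n/n!$. Unwinding both, the two factors of $N_k$ cancel and one finds $\rho_k=k^{-n}\sum_\alpha|t_\alpha|^2_{h^{\otimes k}}$ with $\{t_\alpha\}$ orthonormal for $\int_X|\cdot|^2\,\omega^n/n!$; in particular $A_0=1$ requires no Riemann--Roch input on $N_k$ (contrary to what your opening sentence suggests), and with the paper's $\frac{\sqrt{-1}}{2\pi}$ convention the rescaled model weight is $e^{-\pi|u|^2}$, whose diagonal kernel is $1$, rather than $e^{-|u|^2}$ with diagonal $\pi^{-n}$. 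These are bookkeeping corrections, not gaps, and your sketch survives them intact.
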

\begin{rem}
We take a different normalization of Bergman kernel, so the
expansion starts with order 0, other than order $n$ as it appeared
in (\cite{Do1}, Proposition 6).
\end{rem}
The following approximation result is a corollary of Proposition
\ref{bergexp}.(1)-(2).
\begin{cor}\label{conver}
Using the notation at the beginning of this subsection, we have,
as $k\rightarrow+\infty$, $(h_k)^{\frac{1}{k}}\rightarrow h$, and
$\frac{1}{k}c_1(L^k,h_k)\rightarrow \omega$, the convergence is in
$C^{\infty}$ sense. More precisely, for any $r>0$, there exists a
constant $C_{r,\omega}$ such that
\begin{equation}\label{order}
\left\|\log \frac{h_k^{\frac{1}{k}}}{h}\right\|_{C^r}\le
C_{r,\omega}k^{-2}, \qquad
\left\|\frac{1}{k}c_1(L^k,h_k)-\omega\right\|_{C^r}\le
C_{r,\omega}k^{-2}
\end{equation}
\end{cor}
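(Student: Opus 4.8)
The plan is to reduce the entire statement to the asymptotic expansion of $\rho_k(\omega)$ supplied by Proposition \ref{bergexp}, by first writing the two quantities $h_k^{1/k}$ and $\tfrac1k c_1(L^k,h_k)$ explicitly in terms of $\rho_k(\omega)$ together with a single $x$-independent constant. First I would unwind the definitions of Hilb and FS. If $\{s_\alpha^{(k)}\}$ is an orthonormal basis of $\mbox{Hilb}(h^{\otimes k})$, then the defining ratio of FS gives, as a pointwise quotient of fibre metrics on $L^k$,
\[
\frac{h_k}{h^{\otimes k}}=\Bigl(\sum_{\alpha}|s_\alpha^{(k)}|^2_{h^{\otimes k}}\Bigr)^{-1}=\frac{N_kn!}{Vk^n}\,\rho_k(\omega)^{-1},
\]
the last equality being just the definition of $\rho_k(\omega)$. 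Writing $c_k=\frac{N_kn!}{Vk^n}$ and taking logarithms of $\log h_k=k\log h+\log c_k-\log\rho_k(\omega)$, I obtain the two identities
\[
\log\frac{h_k^{1/k}}{h}=\frac1k\bigl(\log c_k-\log\rho_k(\omega)\bigr),\qquad \frac1k c_1(L^k,h_k)-\omega=\frac{\sqrt{-1}}{2\pi k}\partial\bar\partial\log\rho_k(\omega),
\]
where in the second one the constant $\log c_k$ drops out under $\partial\bar\partial$ and $-\frac{\sqrt{-1}}{2\pi}\partial\bar\partial\log h=\omega$ is used. These two identities are the whole point: both errors are governed by $\log\rho_k(\omega)$ (and, in the first, by the constant $c_k$).

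Next I would feed in the expansion. By Proposition \ref{bergexp}.(1)--(2), $\rho_k(\omega)=1+\tfrac12 S(\omega)k^{-1}+O(k^{-2})$, and because the leading coefficient $A_0(\omega)=1$, the logarithm inherits $\log\rho_k(\omega)=\tfrac12 S(\omega)k^{-1}+O(k^{-2})$. For the constant $c_k$, integrating the expansion against $\omega^n$ and using $\int_X\sum_\alpha|s_\alpha^{(k)}|^2_{h^{\otimes k}}\,\omega^n=V$, which is merely $\sum_\alpha\|s_\alpha^{(k)}\|^2_{\mbox{\scriptsize Hilb}}=N_k$, shows $Vc_k=\int_X\rho_k(\omega)\,\omega^n=V+O(k^{-1})$, hence $\log c_k=O(k^{-1})$. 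Substituting into the two displayed identities and using the extra factor $1/k$ yields the claimed $O(k^{-2})$ bounds. The crucial cancellation is that the $O(1)$ parts of $\log c_k$ and of $\log\rho_k(\omega)$ both vanish (because $c_k\to1$ and $A_0(\omega)=1$), so the bracket is only $O(k^{-1})$ and the prefactor $1/k$ then produces the order $k^{-2}$ rather than $k^{-1}$.

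For the $C^r$ estimates the only real care is that $\partial\bar\partial$ in the curvature identity costs two derivatives and that the passage from $\rho_k(\omega)$ to $\log\rho_k(\omega)$ must be controlled in $C^r$. Both are handled by Proposition \ref{bergexp}.(3): applied with $N=1$ and regularity $r+2$ it bounds $\bigl\|\rho_k(\omega)-(1+\tfrac12 S(\omega)k^{-1})\bigr\|_{C^{r+2}}$ by a constant times $k^{-2}$; since $\rho_k(\omega)\to1$ with all derivatives bounded, the Taylor series of $\log(1+t)$ transfers this estimate to $\log\rho_k(\omega)$ in $C^{r+2}$, after which $\frac1k\partial\bar\partial\log\rho_k(\omega)$ is $O(k^{-2})$ in $C^r$, and likewise $\frac1k(\log c_k-\log\rho_k(\omega))$ is $O(k^{-2})$ in $C^r$.

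I expect no genuine difficulty here: the essential mechanism is the order-zero cancellation forced by the normalization $A_0(\omega)=1$ and $c_k\to1$. The main (still mild) obstacle is purely bookkeeping, namely tracking the two-derivative loss from $\partial\bar\partial$ and ensuring the constants $C_{r,\omega}$ inherit the uniformity of $K_{r,N,\omega}$ through the logarithm; choosing the regularity index $r+2$ in Proposition \ref{bergexp}.(3) absorbs exactly this loss.
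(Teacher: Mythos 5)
Your proposal is correct and follows essentially the same route as the paper: both write $h_k/h^{\otimes k}$ in terms of the normalized Bergman kernel, use the expansion of Proposition \ref{bergexp}.(1)--(3) together with the fact that both $\rho_k(\omega)$ and the normalizing constant $N_kn!/(Vk^n)$ have leading term $1$, so the logarithm is $O(k^{-1})$ and the extra factor $1/k$ yields the $O(k^{-2})$ bound, with the curvature estimate obtained by applying $\partial\bar\partial$. Your treatment is in fact slightly more explicit than the paper's on two points it leaves implicit: the derivation of $N_kn!/(Vk^n)=1+O(k^{-1})$ by integrating the expansion, and the $C^{r+2}$ bookkeeping needed to absorb the two derivatives lost to $\partial\bar\partial$.
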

\begin{proof}
It's easy to see that
\[
(h_k)^{\frac{1}{k}}=h\cdot\left(\sum_\alpha
|s_\alpha^{(k)}|^2_{h^{\otimes k}}\right)^{-\frac{1}{k}}=:h
e^{-\phi_k}
\]
Note that by the expansion in Proposition \ref{bergexp}.(1)-(2), we
have
\begin{eqnarray*}
\sum_\alpha |s_\alpha|_{h^{\otimes
k}}^2&=&\frac{(N_kn!/Vk^n)\sum_{\alpha=1}^{N_k}|s_{\alpha}^{(k)}|_{h^{\otimes
k}}^2}{N_kn!/Vk^n}=\frac{1+\frac{1}{2}S(\omega)k^{-1}+O(k^{-2})}{1+\frac{1}{2}\underline{S}k^{-1}+O(k^{-2})}\\
&=&1+O(k^{-1})
\end{eqnarray*}
So
\[
\phi_k=\frac{1}{k}\log\left(\sum_\alpha |s_\alpha^{(k)}|_{h^{\otimes
k}}^2\right)=O(k^{-2})
\]
The error term is in $C^{\infty}$ sense. So the first inequality in
\eqref{order} holds. The second inequality in \eqref{order} follows
because
\[\frac{1}{k}c_1(L^k,h_k)-\omega=\frac{\sqrt{-1}}{2\pi}\partial\bar{\partial}\phi_k\]
\end{proof}
Now assume we have a K\"{a}hler metric of constant scalar curvature
$\omega_\infty$ in the K\"{a}hler class $c_1(L)$. Take a
$h_{\infty}\in\mathcal{K}_1$ such that
\[
\omega_\infty=c_1(L,h_{\infty})
\]
We will make extensive use of the $k$-th Bergman metric of
$h_\infty$ and its associated objects, so for the rest of note, we
denote
\begin{align}
h_k^*=\mbox{\upshape FS}(\mbox{\upshape Hilb}(h_\infty^{\otimes
k})),\quad \omega_k^*=c_1(L^{k},h_k^*),\quad H_k^*=\mbox{\upshape
Hilb}(h_k^*),\quad h_k^{**}=\mbox{\upshape FS}(\mbox{\upshape
Hilb}(h_k^*))\tag*{$(*)$}\label{hkstar}
\end{align}
Hereafter, we also fix an orthonormal basis $\{\tau_{\alpha}^{(k)},
1\le\alpha\le N_k\}$ of $H_k^*=\mbox{\upshape Hilb}(h_k^*)$.
The next proposition says that Hermitian metrics $h_k^*$ are almost
balanced asymptotically. This will be important for us. (Compare
(\cite{Ma5}, (3.8)-(3.10)))
\begin{prop}\label{estiprop}
For any $r>0$, there exists some constant $C_{r,\omega_\infty}$ such
that
\begin{equation}\label{esti2}
\left\|\sum_{\alpha=1}^{N_k}|\tau_{\alpha}^{(k)}|_{h_k^*}^2-1\right\|_{C^r}\le
C_{r,\omega_\infty}k^{-2}
\end{equation}
So in particular,
\begin{equation}\label{esti3}
\frac{\sqrt{-1}}{2\pi}\partial\bar{\partial}\log\left(\sum_{\alpha=1}^{N_k}|\tau_{\alpha}^{(k)}|^2\right)-\omega_{k}^*=O(k^{-2})
\end{equation}
\end{prop}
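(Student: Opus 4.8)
The plan is to recognize $\sum_{\alpha}|\tau_\alpha^{(k)}|_{h_k^*}^2$ as a normalized Bergman kernel and then to read \eqref{esti2} as the cancellation of two $k^{-1}$ terms, both controlled by the constancy of $S(\omega_\infty)$.

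First I would record the elementary identity linking the two maps: if $h_k\in\mathcal{K}_k$ and $\{\sigma_\alpha\}$ is an orthonormal basis of $\mathrm{Hilb}(h_k)$, then in any local frame for $L^k$ one has
\[
\frac{h_k}{\mathrm{FS}(\mathrm{Hilb}(h_k))}=\sum_{\alpha}|\sigma_\alpha|_{h_k}^2 .
\]
Taking $h_k=h_k^*$ and using the notation \ref{hkstar}, this reads $\sum_{\alpha}|\tau_\alpha^{(k)}|_{h_k^*}^2=h_k^*/h_k^{**}$, so \eqref{esti2} is precisely the assertion that $h_k^*$ is almost a fixed point of $\mathrm{FS}\circ\mathrm{Hilb}$.

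Next I would set $g_k:=(h_k^*)^{1/k}\in\mathcal{K}_1$ and $\eta_k:=c_1(L,g_k)$, so that $h_k^*=g_k^{\otimes k}$ and $\{\tau_\alpha^{(k)}\}$ is exactly an orthonormal basis of $\mathrm{Hilb}(g_k^{\otimes k})$. By the very definition of the normalized kernel,
\[
\sum_{\alpha}|\tau_\alpha^{(k)}|_{h_k^*}^2=\frac{Vk^n}{N_k n!}\,\rho_k(\eta_k).
\]
Corollary \ref{conver} gives $g_k=h_\infty e^{-\phi_k}$ with $\|\phi_k\|_{C^r}=O(k^{-2})$, whence $\eta_k=\omega_\infty+O(k^{-2})$ in every $C^r$ and, since $\omega_\infty$ is of constant scalar curvature and $S$ is a smooth differential operator of the metric, $S(\eta_k)=\underline{S}+O(k^{-2})$. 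The expansion of Proposition \ref{bergexp}, in its uniform form (3), applies to the family $\eta_k$ because these metrics are uniformly bounded in $C^s$ and bounded below; it yields $\rho_k(\eta_k)=1+\frac{1}{2}S(\eta_k)k^{-1}+O(k^{-2})=1+\frac{1}{2}\underline{S}k^{-1}+O(k^{-2})$ in $C^r$. On the other hand, integrating $\rho_k(\omega_\infty)$ against $\omega_\infty^n$ (equivalently, Riemann--Roch for $N_k$) gives $N_k n!/(Vk^n)=1+\frac{1}{2}\underline{S}k^{-1}+O(k^{-2})$, so $Vk^n/(N_k n!)=1-\frac{1}{2}\underline{S}k^{-1}+O(k^{-2})$. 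Multiplying the two expansions, the $k^{-1}$ terms cancel and I obtain $\sum_{\alpha}|\tau_\alpha^{(k)}|_{h_k^*}^2=1+O(k^{-2})$, which is \eqref{esti2}. Then \eqref{esti3} is immediate: since $\frac{\sqrt{-1}}{2\pi}\partial\bar{\partial}\log(\sum_\alpha|\tau_\alpha^{(k)}|^2)=c_1(L^k,h_k^{**})$, the left-hand side of \eqref{esti3} equals $c_1(L^k,h_k^{**})-c_1(L^k,h_k^*)=\frac{\sqrt{-1}}{2\pi}\partial\bar{\partial}\log(h_k^*/h_k^{**})=\frac{\sqrt{-1}}{2\pi}\partial\bar{\partial}\log(\sum_\alpha|\tau_\alpha^{(k)}|_{h_k^*}^2)$, which is $O(k^{-2})$ by \eqref{esti2}.

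The main obstacle is that $\eta_k$ varies with $k$, so one cannot invoke the Bergman expansion for a single fixed metric; the uniformity clause in Proposition \ref{bergexp}.(3), combined with the $C^\infty$ control $\eta_k=\omega_\infty+O(k^{-2})$ coming from Corollary \ref{conver}, is exactly what makes that step legitimate. The second, and conceptually decisive, point is that the two $k^{-1}$ coefficients must agree: this is where the hypothesis $S(\omega_\infty)=\underline{S}$ enters, and without it the argument would only deliver an $O(k^{-1})$ bound rather than the sharp $O(k^{-2})$ of \eqref{esti2}.
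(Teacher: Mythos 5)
Your proof is correct and takes essentially the same route as the paper's: both identify $\sum_\alpha|\tau_\alpha^{(k)}|_{h_k^*}^2$ with $\frac{Vk^n}{N_kn!}\rho_k\left(\frac{1}{k}\omega_k^*\right)$, invoke the uniform expansion of Proposition \ref{bergexp}.(3) for the family $\frac{1}{k}\omega_k^*$ (justified via Corollary \ref{conver}), and cancel the $k^{-1}$ terms against the expansion of $N_kn!/Vk^n$ by using $S\left(\frac{1}{k}\omega_k^*\right)=\underline{S}+O(k^{-2})$, which is exactly where the cscK hypothesis and the $O(k^{-2})$ convergence rate enter. The only difference is cosmetic: the paper organizes the cancellation as a single quotient expansion, while you multiply two expansions whose $k^{-1}$ coefficients cancel.
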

\begin{proof}
By proposition \ref{bergexp}, we have
\begin{eqnarray*}\label{esti1}
\sum_{\alpha=1}^{N_k}|\tau_{\alpha}^{(k)}|_{h_k^*}^2-1&=&\frac{(N_kn!/Vk^n)\sum_{\alpha=1}^{N_k}|\tau_{\alpha}^{(k)}|_{h_k^*}^2}{N_kn!/Vk^n}-1=\frac{1+\frac{1}{2}S(\frac{1}{k}\omega_k^*)k^{-1}+O(k^{-2})}{1+\frac{1}{2}\underline{S}k^{-1}+O(k^{-2})}-1\nonumber\\
&=&\left(\frac{1}{2}\left[S\left(\frac{1}{k}\omega_k^*\right)-\underline{S}\right]k^{-1}+O(k^{-2})\right)(1+O(k^{-1}))\nonumber\\
&=&O(k^{-2})
\end{eqnarray*}
Since by Corollary \ref{conver},
$\frac{1}{k}\omega_k^*\rightarrow\omega_\infty$ in $C^{\infty}$, so
$\{\frac{1}{k}\omega_k^*\}$ have bounded geometry. So by Proposition
\ref{bergexp}.(3), the expansions on the first line are uniform in
$k$.

The last equality is because, by \eqref{order}, the convergence rate
of $\frac{1}{k}\omega_k^*\rightarrow\omega_\infty$ is $O(k^{-2})$,
and $S(\omega_\infty)=\underline{S}$, so we also have
$S(\frac{1}{k}\omega_k^*)-\underline{S}=O(k^{-2})$ in $C^{\infty}$.

\eqref{esti3} follows because the left hand side of it is equal to $
\frac{\sqrt{-1}}{2\pi}\partial\bar{\partial}\log\left(\sum_{\alpha=1}^{N_k}|\tau_{\alpha}^{(k)}|_{h_k^*}^2\right)
$
\end{proof}
\end{subsection}
\begin{subsection}{Aubin-Yau functional and Chow norm}\label{aubinchow}
We define the Aubin-Yau functional with respect to $(L^k,
h_{k}^{**})$ by
\[
I_k(h_k^{**}e^{-\phi})=-\int_0^1dt\int_X\frac{d\phi(t)}{dt}\,c_1(L^k,h_k^{**}e^{-\phi(t)})^n
\]
Here $\phi\in\mathcal{K}_k$ i.e. $\frac{1}{k}\phi\in\mathcal{K}$.
$\phi(t)$ is a path connecting 0 and $\phi$ in $\mathcal{K}_k$.

Under the orthonormal basis $\{\tau_{\alpha}^{(k)}, 1\le\alpha\le
N_k\}$ of $H_k^*$. Then $H^0(X,L^k)\cong \mathbb{C}^{N_k}$ and
$\mathbb{P}(H^0(X,L^k)^*)\cong\mathbb{CP}^{N_k-1}$.

For any $H\in\mathcal{K}_k$, take an orthonormal basis $\{s_\alpha,
1\le\alpha\le N_k\}$ of $H$. Let $\det H_k$ denote the determinant
of matrix $(H_k)_{\alpha\beta}=(H_{k}^*(s_\alpha,s_\beta))$.
$\{s_\alpha\}$ determines a projective embedding into
$\mathbb{P}(H^0(X,L^k)^*)\cong\mathbb{CP}^{N_k-1}$. The image of
this embedding is denoted by $X_k(H)\subset\mathbb{CP}^{N_k-1}$ and
has degree $d_k=Vk^n$. $X_k(H)$ has a Chow point
\[
\hat{X}_k(H)\in
W_k:=\{\mbox{Sym}^{d_k}(\mathbb{C}^{N_k})\}^{\otimes(n+1)}
\]
\begin{prop}[\cite{Zh},\cite{Pa}]\label{chownorm}
$W_k$ has a Chow norm $\|\cdot\|_{\mbox{\upshape \scriptsize
CH}(H_{k}^{*})}$, such that
\[
\frac{1}{N_k}\log\det H_k-\frac{1}{Vk^n}I_k(\mbox{\upshape
FS}(H_k))=\frac{1}{Vk^n}\log\|\hat{X}_k(H)\|_{\mbox{\upshape
\scriptsize CH}(H_{k}^{*})}^2
\]
\end{prop}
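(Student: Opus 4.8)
The plan is to read both sides as $U(N_k)$-invariant functions on the symmetric space $\mathcal{H}_k\cong GL(N_k,\mathbb{C})/U(N_k)$ and to prove their equality by matching values at one base point and first variations everywhere. Parametrize $H\in\mathcal{H}_k$ by the positive Hermitian Gram matrix $P=(H(\tau_\alpha^{(k)},\tau_\beta^{(k)}))$ of the fixed $H_k^*$-orthonormal basis; then an $H$-orthonormal basis is $s=P^{-1/2}\tau$, so $(H_k)_{\alpha\beta}=H_k^*(s_\alpha,s_\beta)=(P^{-1})_{\alpha\beta}$ and $\frac{1}{N_k}\log\det H_k=-\frac{1}{N_k}\log\det P$. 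The right-hand side is likewise a function of $H$ alone, because the Chow point $\hat X_k(H)$ of the $H$-embedding is the image of $\hat X_k(H_k^*)$ under the $GL(N_k,\mathbb{C})$-element relating $H$ to $H_k^*$. Since $\mathcal{H}_k$ is connected, it suffices to check that the two sides agree at the base point $H=H_k^*$ and that their differentials agree along an arbitrary smooth path $P_t$. At $H=H_k^*$ we have $P=\mathrm{Id}$, hence $\log\det H_k=0$, while $\mathrm{FS}(H_k^*)=h_k^{**}$ by the definition \ref{hkstar}, so $I_k(\mathrm{FS}(H_k^*))=I_k(h_k^{**})=0$; with Zhang's normalization of the Chow norm (which fixes the scale of the Chow point) the right-hand side also vanishes there.

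The left-hand variation is routine. First, $\frac{d}{dt}\,\frac{1}{N_k}\log\det H_k=-\frac{1}{N_k}\,\mathrm{tr}\!\left(P^{-1}\dot P\right)$. Next, the Fubini--Study potential attached to $H$ is $\psi_{H}=\log\!\big(\sum_\alpha|s_\alpha|^2_{h_k^{**}}\big)=\log\!\big(\tau^{*}P^{-1}\tau\big)$ (up to the fixed reference weight), so $\mathrm{FS}(H_t)=h_k^{**}e^{-\psi_{H_t}}$ and, by the defining integral of $I_k$ and the localization of the Aubin--Yau functional,
\[
\frac{d}{dt}\,I_k(\mathrm{FS}(H_t))=-\int_X\dot\psi_{H_t}\,c_1(L^k,\mathrm{FS}(H_t))^n
=\int_X\frac{\tau^{*}P^{-1}\dot P\,P^{-1}\tau}{\tau^{*}P^{-1}\tau}\,\omega_{\mathrm{FS}(H_t)}^{\,n}.
\]
Thus, after multiplying the claimed identity by $d_k=Vk^n$, the derivative of the left side equals $-\tfrac{d_k}{N_k}\mathrm{tr}(P^{-1}\dot P)-\int_X\frac{\tau^{*}P^{-1}\dot P\,P^{-1}\tau}{\tau^{*}P^{-1}\tau}\,\omega_{\mathrm{FS}(H_t)}^{\,n}$.

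The crux, and the step I expect to be the main obstacle, is the variation of the Chow norm, which is exactly the content imported from \cite{Zh} and \cite{Pa}. Here one differentiates the defining expression of $\log\|\hat X_k(H)\|^2_{\mathrm{CH}(H_k^*)}$, i.e. the norm of the $X$-resultant (Chow form) of degree $d_k$ in each of the $n+1$ groups of variables, measured by the metric induced by $H_k^*$ on $W_k$. The difficulty is that the resultant is a global algebraic object on $(\mathbb{P}^{N_k-1})^{n+1}$; the key analytic fact is that the logarithmic derivative of its norm, under the infinitesimal $GL(N_k,\mathbb{C})$-action, localizes to an integral over $X_k(H)$ against the Monge--Amp\`ere measure $\omega_{\mathrm{FS}(H)}^{\,n}$, together with the trace term coming from the action on $\mathrm{Sym}^{d_k}$. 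Concretely, Zhang's formula yields exactly
\[
\frac{d}{dt}\log\|\hat X_k(H_t)\|^2_{\mathrm{CH}(H_k^*)}
=-\frac{d_k}{N_k}\,\mathrm{tr}\!\left(P^{-1}\dot P\right)-\int_X\frac{\tau^{*}P^{-1}\dot P\,P^{-1}\tau}{\tau^{*}P^{-1}\tau}\,\omega_{\mathrm{FS}(H_t)}^{\,n},
\]
matching the left-hand variation term by term. A clean way to organize this (following Phong--Sturm) is to recognize the Chow norm as the metric on the appropriate Deligne pairing line: the transformation law of the Chow form under $GL(N_k,\mathbb{C})$ and the change-of-metric formula for Deligne pairings produce the trace term and the Aubin--Yau integral simultaneously, so that the identity becomes the integrated form of the statement that balanced metrics are the critical points of the Chow norm. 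Combining the equality of differentials with the matched base-point values and integrating along paths in $\mathcal{H}_k$ gives the proposition.
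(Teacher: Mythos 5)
The paper never proves this proposition; it is imported wholesale from Zhang \cite{Zh} and Paul \cite{Pa}, so your attempt has to stand on its own as a proof. Your framework is reasonable: both sides are $U(N_k)$-invariant functions of $H$ on the connected space $\mathcal{H}_k$, so matching the value at $H_k^*$ and the first variation would suffice, and your left-hand computations are correct (the Gram matrix of an $H$-orthonormal basis is $P^{-1}$, $I_k(\mbox{FS}(H_k^*))=I_k(h_k^{**})=0$, and the two variation terms are as you wrote them).

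The gap is exactly at the step you call the crux. The formula you attribute to Zhang,
\[
\frac{d}{dt}\log\|\hat X_k(H_t)\|^2_{\mbox{\scriptsize CH}(H_k^*)}
=-\frac{d_k}{N_k}\,\mathrm{tr}\!\left(P^{-1}\dot P\right)-\int_X\frac{\tau^{*}P^{-1}\dot P\,P^{-1}\tau}{\tau^{*}P^{-1}\tau}\,\omega_{\mathrm{FS}(H_t)}^{\,n},
\]
is not Zhang's formula, and it is false. The variational formula actually proved in \cite{Zh},\cite{Pa} (and by Phong--Sturm) is
\[
\frac{d}{dt}\log\|e^{t\Lambda}\cdot\hat X\|^2_{\mbox{\scriptsize CH}}
=2(n+1)\int_{X_t}\frac{\langle\Lambda z,z\rangle}{|z|^2}\,\omega_{FS}^n,
\]
with a factor $n+1$ (one for each of the $n+1$ groups of variables of the Chow form, equivalently the $n+1$ factors of the Deligne pairing) and with \emph{no} separate trace term: the integral term already carries the entire center direction, since for $\Lambda=\mathrm{Id}$ it gives $2(n+1)d_k$, which is precisely the weight $d_k(n+1)$ of the Chow form under scalars. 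Your formula fails this consistency check: under $H\mapsto e^{c}H$ the Chow point is multiplied by $e^{-cd_k(n+1)/2}$, so the left side changes by $-cd_k(n+1)$, while your right side integrates to $-2cd_k$; these agree only when $n=1$. A completely explicit counterexample inside the paper's setting is $X=\mathbb{P}^2$, $L=\mathcal{O}(1)$, $k=1$ (so $N_k=3$, $d_k=1$, Chow form $\det(w_0,w_1,w_2)$): along $e^{t\Lambda}$ the true derivative is $2\,\mathrm{tr}\Lambda$, whereas your formula gives $\frac{4}{3}\,\mathrm{tr}\Lambda$. Since the only justification offered for your key formula is that it reproduces the derivative of the identity being proven, the argument is circular at the one point where actual content is required. (The same test shows the statement as printed also holds only after renormalization: with Zhang's norm the correct identity is $-\frac{1}{d_k}I_k(\mbox{FS}(H))=\frac{1}{d_k(n+1)}\bigl[\log\|\hat X_k(H)\|^2_{\mbox{\scriptsize CH}}-\log\|\hat X_k(H_k^*)\|^2_{\mbox{\scriptsize CH}}\bigr]$ on the $SL(N_k,\mathbb{C})$-orbit of $H_k^*$, the determinant term being accounted for by a determinant twist of the norm. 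This is harmless for the paper, which uses the proposition only up to positive constants along $SL$-geodesics, but it is fatal for a proof that tries to match variations term by term against the literal statement.)
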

$SL(N_k,\mathbb{C})$ acts on $\mathcal{H}_k$. Note that $
X_k(\sigma\cdot H_{k}^{*})=\sigma\cdot X_k(H_{k}^*)$. Define
\[
f_k(\sigma)=\log\left(\|\hat{X}_k(\sigma\cdot
H_{k}^{*})\|_{\mbox{\upshape\scriptsize
CH}(H_{k}^{*})}^2\right)\quad\forall \sigma\in SL(N_k,\mathbb{C})
\]
It's easy to see that $f_k(\sigma\cdot \sigma_1)=f_k(\sigma)$ for
any $\sigma_1\in SU(N_k)$, so $f_k$ is a function on the symmetric
space $SL(N_k,\mathbb{C})/SU(N_k)$. We have
\begin{prop}[\cite{KN},\cite{Zh},\cite{Do2}]\label{conv2}
$f_k(\sigma)$ is convex on $SL(N_k,\mathbb{C})/SU(N_k)$.
\end{prop}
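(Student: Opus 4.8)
The plan is to prove convexity along an arbitrary geodesic of $SL(N_k,\mathbb{C})/SU(N_k)$, deducing it from two structural features of $f_k$: its invariance under the right action of $SU(N_k)$, and the subharmonicity of the logarithm of the Chow norm of a holomorphically moving Chow point. The geodesics through a class $[\sigma_0]$ are the curves $t\mapsto[\sigma_0\exp(tA)]$ with $A$ traceless and Hermitian with respect to the reference inner product $H_k^*$. So it suffices to fix such $\sigma_0,A$ and to show that $g(t):=f_k(\sigma_0\exp(tA))=\log\|\sigma_0\exp(tA)\cdot w\|_{\mathrm{CH}(H_k^*)}^2$ is convex, where $w:=\hat X_k(H_k^*)$ and I use $\hat X_k(\sigma\cdot H_k^*)=\sigma\cdot w$.

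First I would complexify the parameter: since $A$ is traceless, $\exp(\tau A)\in SL(N_k,\mathbb{C})$ for all $\tau\in\mathbb{C}$, so $G(\tau):=\log\|\sigma_0\exp(\tau A)\cdot w\|_{\mathrm{CH}(H_k^*)}^2$ is defined and restricts to $g$ on the real axis. The first observation is that $G$ is independent of $\mathrm{Im}\,\tau$. Writing $\tau=t+\sqrt{-1}\,s$ and using that $A$ is $H_k^*$-Hermitian, the factor $\exp(\sqrt{-1}\,sA)$ is unitary, hence lies in $SU(N_k)$; since $\sigma\cdot H_k^*$ (and therefore $f_k$) depends on $\sigma$ only modulo the right action of $SU(N_k)$, we get $G(t+\sqrt{-1}\,s)=f_k(\sigma_0\exp(tA)\exp(\sqrt{-1}\,sA))=f_k(\sigma_0\exp(tA))=G(t)$.

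Second I would show that $G$ is subharmonic on $\mathbb{C}$. For this I would use Zhang's integral description of the Chow norm (\cite{Zh}, underlying Proposition \ref{chownorm}): up to an additive constant, $\log\|R\|_{\mathrm{CH}(H_k^*)}$ is the average of $\log|R(z_0,\dots,z_n)|$ over the product of unit spheres of $(H^0(X,L^k),H_k^*)$, where $R$ is the Chow form. The moving point $\sigma_0\exp(\tau A)\cdot w$ is the Chow form of $\sigma_0\exp(\tau A)\cdot X_k(H_k^*)$, whose value at a fixed tuple is $R(\exp(-\tau A)\sigma_0^{-1}z_0,\dots,\exp(-\tau A)\sigma_0^{-1}z_n)$ (the determinant cocycle is trivial because $A$ is traceless), and this depends holomorphically on $\tau$. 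Hence $\tau\mapsto\log|(\sigma_0\exp(\tau A)\cdot R)(z)|$ is subharmonic for each fixed $z$; averaging against the fixed probability measure preserves the sub-mean-value inequality, and since the Chow norm is finite and positive, $G$ is finite. Thus $G$ is subharmonic.

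Combining the two points finishes the argument: $G$ subharmonic means $0\le\Delta_\tau G=\partial_t^2 G+\partial_s^2 G$ in the sense of distributions, while $\partial_s^2 G=0$ by the $s$-independence, so $g''(t)=\partial_t^2 G\ge 0$ and $g$ is convex, which is the asserted convexity of $f_k$. The hard part, and the genuine content, is the third step: convexity is not pointwise in $z$ (the restriction of a subharmonic function to a real line need not be convex), so it is essential to combine subharmonicity in $\tau$ with the $s$-independence forced by $SU(N_k)$-invariance. The technical care lies in justifying subharmonicity of the averaged logarithm, i.e.\ handling the vanishing loci of the Chow form and passing the sub-mean-value inequality through the integral, which is exactly where the finiteness and positivity of the Chow norm are used.
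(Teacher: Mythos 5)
The paper itself contains no proof of Proposition \ref{conv2}: it is quoted as a known result, with citations to \cite{KN}, \cite{Zh}, \cite{Do2}, and is used later (in Lemma \ref{step2}) only through convexity along the single geodesic $s\mapsto e^{s\hat{\Lambda}}\cdot H_k^*$. So your argument cannot be compared with an internal proof; what it does is reconstruct the argument living in the cited references, essentially Zhang's, and the reconstruction is correct. Your three steps are exactly the right ones, and in the right logical order: (i) reduce to geodesics $t\mapsto\sigma_0\exp(tA)$ with $A$ traceless and $H_k^*$-Hermitian, which is precisely the notion of convexity the paper needs; (ii) kill the imaginary direction using $\exp(\sqrt{-1}\,sA)\in SU(N_k)$ (this needs both that $A$ is Hermitian and that it is traceless, and uses the right-$SU(N_k)$-invariance of $f_k$ stated in the paper just before the proposition); (iii) get subharmonicity of the complexified function from the integral (Mahler-measure type) formula for $\log\|\cdot\|_{\mathrm{CH}(H_k^*)}$ together with holomorphy of $\tau\mapsto\exp(\tau A)\cdot w$ --- here it matters, as you note, that the action of $SL(N_k,\mathbb{C})$ on Chow forms is algebraic and involves no complex conjugation. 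Your closing caveats are the genuinely delicate points and you identify them correctly: the average of the subharmonic functions $\tau\mapsto\log|(\exp(\tau A)\cdot R)(z)|$ is subharmonic because these are locally uniformly bounded above and the average is finite (positivity of the Chow norm of a nonzero Chow form), and a finite subharmonic function on $\mathbb{C}$ independent of $\mathrm{Im}\,\tau$ is convex in $\mathrm{Re}\,\tau$ (distributionally $\partial_t^2G=\Delta G\ge0$). One conceptual remark worth making explicit: the Chow norm is \emph{not} a Hermitian norm on $W_k$, so the elementary Kempf--Ness proof (diagonalize $A$ in an orthonormal basis and observe that a logarithm of a sum of exponentials is convex) is not available; your psh-plus-unitary-invariance route is the correct generalization, and it isolates the only two properties of the Chow norm actually needed --- the integral formula and its $U(N_k)$-invariance. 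What the paper's citation buys is brevity; what your argument buys is a self-contained note and a clear view of which structural facts drive the convexity.
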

To relate $\mathcal{K}_k$ and $\mathcal{H}_k$, following Donaldson
\cite{Do2}, we change $FS(H_k)$ in the above formula into general
$h_k\in\mathcal{K}_k$ and define:
\begin{defn}
For all $h_{k}\in\mathcal{K}_k$ and $H_k\in\mathcal{H}_k$,
\[
\tilde{P}_k(h_{k},H_{k})=\frac{1}{N_k}\log\det
H_{k}-\frac{1}{Vk^n}I_k(h_{k})
\]
\end{defn}
Note that, for any $c\in\mathbb{R}$,
$I_k(e^{c}h_{k})=cVk^n+I_k(h_{k})$, so
\[
\tilde{P}_k(e^{c}h_k,e^{c}H_k)=\tilde{P}_k(h_k,H_k)
\]
\begin{rem}
This definition differs from Donaldson's definition by omitting two
extra terms, since we find of no use for these terms in the
following argument.
\end{rem}
%
\end{subsection}
\end{section}
\begin{section}{Proof of Theorem 1}
\begin{lem}\label{conv1}
For any $h_k$, $h_k'\in \mathcal{K}_k$, with $h_k'=h_k e^{-\phi}$,
we have
\[
-\int_X\phi\;c_1(L^k,h_k)^n\le I_k(h_k')-I_k(h_k)\le -\int_X\phi\;
c_1(L^k, h_k')^n
\]
\end{lem}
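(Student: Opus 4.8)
The plan is to write $I_k(h_k')-I_k(h_k)$ as a single integral along the linear segment of metrics joining $h_k$ to $h_k'$, and then to deduce both inequalities from the monotonicity of an auxiliary function of the segment parameter. First I would use that $I_k$ is path-independent, so that for any path $t\mapsto h_k^{**}e^{-\phi(t)}$ one has $\frac{d}{dt}I_k(h_k^{**}e^{-\phi(t)})=-\int_X\dot\phi(t)\,c_1(L^k,h_k^{**}e^{-\phi(t)})^n$. Choosing the path so that $h_k^{**}e^{-\phi(t)}=h_ke^{-t\phi}$ (i.e.\ $\dot\phi(t)=\phi$) and integrating the derivative from $t=0$ (where the metric is $h_k$) to $t=1$ (where it is $h_k'=h_ke^{-\phi}$) yields
\[
I_k(h_k')-I_k(h_k)=-\int_0^1 dt\int_X\phi\;c_1(L^k,h_ke^{-t\phi})^n.
\]

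Then I set $\omega_t=c_1(L^k,h_ke^{-t\phi})$. From $\log(h_ke^{-t\phi})=\log h_k-t\phi$ one gets $\omega_t=\omega_0+t\frac{\sqrt{-1}}{2\pi}\partial\dbar\phi=(1-t)\omega_0+t\omega_1$, where $\omega_0=c_1(L^k,h_k)$ and $\omega_1=c_1(L^k,h_k')$. Since $h_k,h_k'\in\mathcal{K}_k$, both $\omega_0$ and $\omega_1$ are positive, so each convex combination $\omega_t$ is positive for $t\in[0,1]$. Introducing
\[
G(t)=-\int_X\phi\;\omega_t^n,
\]
the identity above becomes $I_k(h_k')-I_k(h_k)=\int_0^1 G(t)\,dt$, and the two bounds sought in the lemma are precisely $G(0)=-\int_X\phi\,c_1(L^k,h_k)^n$ and $G(1)=-\int_X\phi\,c_1(L^k,h_k')^n$.

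The key step is to show that $G$ is nondecreasing on $[0,1]$. Differentiating and using $\frac{d\omega_t}{dt}=\frac{\sqrt{-1}}{2\pi}\partial\dbar\phi$ gives $G'(t)=-n\int_X\phi\,\frac{\sqrt{-1}}{2\pi}\partial\dbar\phi\wedge\omega_t^{n-1}$; integrating by parts, using that $\omega_t$ is closed, turns this into
\[
G'(t)=n\int_X\frac{\sqrt{-1}}{2\pi}\partial\phi\wedge\dbar\phi\wedge\omega_t^{n-1}\ge 0,
\]
because $\frac{\sqrt{-1}}{2\pi}\partial\phi\wedge\dbar\phi$ is a nonnegative $(1,1)$-form and $\omega_t^{n-1}>0$ for $t\in[0,1]$. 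Monotonicity of $G$ then gives $G(0)\le\int_0^1 G(t)\,dt\le G(1)$, which is exactly the assertion. I do not anticipate a real obstacle: the whole argument rests on the positivity of the interpolating curvature $\omega_t$---guaranteed because it is a convex combination of the two positive endpoints---and on the sign of $G'(t)$ coming from the elementary positivity of $\partial\phi\wedge\dbar\phi\wedge\omega_t^{n-1}$.
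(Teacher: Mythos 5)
Your proof is correct and is essentially the paper's argument: your function $G(t)$ is precisely the first derivative of $t\mapsto I_k(h_ke^{-t\phi})$, and your computation $G'(t)=n\int_X\frac{\sqrt{-1}}{2\pi}\partial\phi\wedge\bar{\partial}\phi\wedge\omega_t^{n-1}\ge 0$ is the same integration by parts the paper writes as $-\int_X\phi\triangle_t\phi\,c_1(L^k,h_k(t))^n=\int_X|\nabla_t\phi|^2c_1(L^k,h_k(t))^n\ge 0$, i.e.\ convexity of $I_k$ along the linear path. The only cosmetic difference is that you phrase the conclusion as monotonicity of the derivative $G$ rather than convexity of $I_k$ itself, which is the same statement.
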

This is (\cite{Do2}, Lemma 1).
\begin{proof}
This lemma just says $I_k$ is a convex function on $\mathcal{K}_k$,
regarded as an open subset of $C^{\infty}(X)$, we only need to
calculate its second derivative along the path
$h_k(t)=h_ke^{-t\phi}$:
\[
\frac{d^2}{dt^2}I_k(h_k)=-\int_X\phi\triangle_t \phi
c_1(L^k,h_k(t))^n=\int_X|\nabla_t\phi|^2 c_1(L^k,h_k(t))^n\ge 0
\]
$\triangle_t$ and $\nabla_t$ is the Laplace and gradient operator of
K\"{a}hler metric $c_1(L^k,h_k(t))$.
\end{proof}
From now on, fix a $\omega\in c_1(L)$, take a Hermitian metric
$h\in\mathcal{K}$ such that $\omega=c_1(L,h)$. We have the $k$-th
Bergman metric $h_k=\mbox{FS}(\mbox{Hilb}(h^{\otimes k}))$ and
corresponding K\"{a}hler metric $\omega_{k}=c_1(L^k,h_{k})$ by
Corollary \ref{conver}
\[
(h_k)^{\frac{1}{k}}\rightarrow h,\quad
\frac{1}{k}\omega_{k}\rightarrow \omega,\quad \mbox{in}\quad
C^{\infty}
\]
\begin{lem}\label{step1}
\[
\tilde{P}_k(h_k,\mbox{\upshape
Hilb}(h_k))\ge\tilde{P}_k(\mbox{\upshape FS}(\mbox{\upshape
Hilb}(h_k)),\mbox{\upshape Hilb}(h_k))
\]
\end{lem}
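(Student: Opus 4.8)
The plan is to exploit the fact that both sides of the asserted inequality involve the \emph{same} Hermitian form $H_k := \mbox{Hilb}(h_k)$ in the first slot of $\tilde{P}_k$. Since $\tilde{P}_k(h_k,H_k)=\frac{1}{N_k}\log\det H_k-\frac{1}{Vk^n}I_k(h_k)$ and the term $\frac{1}{N_k}\log\det H_k$ depends only on the second argument, the determinant contributions cancel upon subtraction. Thus
\[
\tilde{P}_k(h_k,\mbox{Hilb}(h_k))-\tilde{P}_k(\mbox{FS}(\mbox{Hilb}(h_k)),\mbox{Hilb}(h_k))=\frac{1}{Vk^n}\Big(I_k(\mbox{FS}(\mbox{Hilb}(h_k)))-I_k(h_k)\Big),
\]
and the whole lemma reduces to proving the single inequality $I_k(h_k)\le I_k(h_k')$, where I abbreviate $h_k':=\mbox{FS}(\mbox{Hilb}(h_k))$.

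To control this difference I would invoke Lemma \ref{conv1}, the convexity of $I_k$, which for $h_k'=h_k e^{-\phi}$ gives the lower bound $I_k(h_k')-I_k(h_k)\ge -\int_X\phi\,c_1(L^k,h_k)^n$. Hence it suffices to show $\int_X\phi\,c_1(L^k,h_k)^n\le 0$. The next step is to identify $\phi$ explicitly. Let $\{s_\alpha\}$ be an orthonormal basis of $H_k=\mbox{Hilb}(h_k)$. Unwinding the definition of FS, one finds that for a local frame $e$ of $L^k$ one has $|e|_{h_k'}^{-2}=\sum_\alpha|s_\alpha/e|^2$, so that $|e|_{h_k}^2/|e|_{h_k'}^2=\sum_\alpha|s_\alpha|_{h_k}^2$; therefore
\[
\phi=\log\frac{|e|_{h_k}^2}{|e|_{h_k'}^2}=\log\Big(\sum_{\alpha=1}^{N_k}|s_\alpha|_{h_k}^2\Big).
\]

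With $\phi$ so identified, the remaining point is a normalization plus a convexity estimate. Writing $\psi=\sum_\alpha|s_\alpha|_{h_k}^2=e^{\phi}$, the orthonormality relation $\frac{N_k}{Vk^n}\int_X|s_\alpha|_{h_k}^2\,c_1(L^k,h_k)^n=1$ summed over $\alpha$ yields $\int_X\psi\,c_1(L^k,h_k)^n=Vk^n$, while $\int_X c_1(L^k,h_k)^n=k^n\int_X c_1(L)^n=Vk^n$. Thus the average of $\psi$ against the probability measure $\frac{1}{Vk^n}c_1(L^k,h_k)^n$ equals $1$. Applying Jensen's inequality with the concave function $\log$ gives
\[
\frac{1}{Vk^n}\int_X\phi\,c_1(L^k,h_k)^n=\frac{1}{Vk^n}\int_X\log\psi\,c_1(L^k,h_k)^n\le\log\Big(\frac{1}{Vk^n}\int_X\psi\,c_1(L^k,h_k)^n\Big)=\log 1=0,
\]
which is exactly the required bound and closes the argument.

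I do not expect a genuine obstacle here: the content is the cancellation of the determinant term, the convexity estimate of Lemma \ref{conv1}, and Jensen's inequality. The one place demanding care is the bookkeeping around $\phi$ — correctly reading off $\phi=\log\big(\sum_\alpha|s_\alpha|_{h_k}^2\big)$ from the FS normalization and tracking the sign convention $h_k'=h_k e^{-\phi}$ so that the favorable (lower) bound in Lemma \ref{conv1} is the one that applies. Once the sign is pinned down, the normalization $\int_X\psi\,c_1(L^k,h_k)^n=Vk^n$ makes Jensen's inequality decisive.
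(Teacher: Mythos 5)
Your proof is correct and is essentially identical to the paper's: the paper also cancels the $\frac{1}{N_k}\log\det$ term, identifies $\phi=\log\bigl(\sum_\alpha|s_\alpha^{(k)}|_{h_k}^2\bigr)$, applies Lemma \ref{conv1}, and concludes with the concavity of $\log$ (Jensen) together with the $\mbox{Hilb}$ normalization $\frac{1}{N_k}\sum_\alpha\|s_\alpha^{(k)}\|_{\mbox{\scriptsize\upshape Hilb}(h_k)}^2=1$. The only difference is organizational: you isolate the claim $\int_X\phi\,c_1(L^k,h_k)^n\le 0$ before invoking convexity, while the paper chains the two inequalities in one display.
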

This is a corollary of (\cite{Do2}, Lemma 4). Since the definition
of $\tilde{P}$ is a little different from that in \cite{Do2}, we
give a direct proof here.
\begin{proof}
Let $h_k'=\mbox{FS}(\mbox{Hilb}(h_k))$. Then
\[
\tilde{P}_k(h_k,\mbox{\upshape
Hilb}(h_k))-\tilde{P}_k(\mbox{\upshape FS}(\mbox{\upshape
Hilb}(h_k)),\mbox{\upshape
Hilb}(h_k))=\frac{1}{Vk^n}(I(h_k')-I(h_k))
\]
Let $\{s_{\alpha}^{(k)}, 1\le\alpha\le N_k\}$ be an orthonormal
basis of $\mbox{Hilb}(h_k)$. Then $
\log\frac{h_k'}{h_k}=-\log(\sum_{\alpha=1}^{N_k}|s_{\alpha}^{(k)}|_{h_k}^2)
$. By Lemma \ref{conv1} and concavity of the function $\log$,
\begin{eqnarray*}
\frac{1}{Vk^n}(I(h_k')-I(h_k))&\ge&
-\frac{1}{Vk^n}\int_X\log\left(\sum_{\alpha}|s_{\alpha}^{(k)}|_{h_k}^2\right)c_1(L^k,h_k)^n\\
&\ge&-\log\left(\frac{1}{N_k}\frac{N_k}{Vk^n}\int_X\sum_{\alpha}|s_{\alpha}^{(k)}|^2_{h_k}c_1(L^k,h_k)^n\right)\\
&=&-\log\left(\frac{1}{N_k}\sum_{\alpha}\|s_{\alpha}^{(k)}\|^2_{\mbox{\scriptsize
\upshape Hilb}(h_k)}\right)=0
\end{eqnarray*}
\end{proof}
\begin{lem}\label{step2}
There exits a constant $C>0$, depending only on $h$ and $h_\infty$,
such that
\[
\tilde{P}_k(\mbox{\upshape FS}(\mbox{\upshape
Hilb}(h_k)),\mbox{\upshape Hilb}(h_k))-\tilde{P}_k(\mbox{\upshape
FS}(H_k^*),H_k^*)\ge -Ck^{-1}
\]
\end{lem}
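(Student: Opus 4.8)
The plan is to convert the inequality into a convexity statement on the symmetric space and then to exploit the almost-balanced property of $h_k^*$ (Proposition \ref{estiprop}) to control a single first derivative. First I would rewrite both sides using the Chow norm. Since both arguments of $\tilde P_k$ have the form $(\mbox{FS}(H),H)$, Proposition \ref{chownorm} gives $\tilde P_k(\mbox{FS}(H),H)=\frac{1}{Vk^n}\log\|\hat X_k(H)\|^2_{\mbox{\scriptsize CH}(H_k^*)}=\frac{1}{Vk^n}f_k(\sigma)$ whenever $H=\sigma\cdot H_k^*$. Writing $\mbox{Hilb}(h_k)=\sigma_k\cdot H_k^*$ and using the scaling invariance $\tilde P_k(e^ch_k,e^cH_k)=\tilde P_k(h_k,H_k)$ to arrange $\sigma_k\in SL(N_k,\mathbb{C})$, the left-hand side of the lemma equals $\frac{1}{Vk^n}\big(f_k(\sigma_k)-f_k(\mbox{id})\big)$. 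Let $A$ be the traceless Hermitian matrix with $\sigma_k=e^{A}$, i.e. the geodesic direction from $\mbox{id}$ to $\sigma_k$, and set $g(t)=f_k(e^{tA})$. By Proposition \ref{conv2} the function $g$ is convex, so $f_k(\sigma_k)-f_k(\mbox{id})=g(1)-g(0)\ge g'(0)$, and it suffices to prove $g'(0)\ge -Ck^{n-1}$.

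To compute $g'(0)$ I would differentiate $\tilde P_k(\mbox{FS}(e^{tA}\cdot H_k^*),e^{tA}\cdot H_k^*)$ at $t=0$. Because $A$ is traceless, the determinant term $\frac{1}{N_k}\log\det_{H_k^*}(e^{tA}\cdot H_k^*)$ vanishes identically, leaving $g'(0)=-\frac{d}{dt}\big|_{0}I_k(\mbox{FS}(e^{tA}\cdot H_k^*))$. Differentiating the Aubin--Yau functional together with the Fubini--Study metric (using $\mbox{FS}(H_k^*)=h_k^{**}$ and $c_1(L^k,h_k^{**})=\frac{\sqrt{-1}}{2\pi}\partial\bar{\partial}\log\sum_\alpha|\tau_\alpha^{(k)}|^2$) yields, after a routine calculation,
\[
g'(0)=2\int_X a_A\,e^{-\psi_k}\,c_1(L^k,h_k^{**})^n,\qquad a_A:=\sum_{\beta,\gamma}A_{\beta\gamma}\,\langle\tau_\beta^{(k)},\tau_\gamma^{(k)}\rangle_{h_k^*},
\]
where $\psi_k=\log\sum_\alpha|\tau_\alpha^{(k)}|^2_{h_k^*}$.

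Next I would insert the balancing cancellation. Since $\{\tau_\alpha^{(k)}\}$ is orthonormal for $H_k^*=\mbox{Hilb}(h_k^*)$, the definition of $\mbox{Hilb}$ gives $\int_X a_A\,(\omega_k^*)^n=\frac{Vk^n}{N_k}\mbox{tr}(A)=0$. Subtracting this from the expression above leaves $g'(0)=2\int_X a_A\,\lambda_k\,(\omega_k^*)^n$, where $\lambda_k:=e^{-\psi_k}c_1(L^k,h_k^{**})^n/(\omega_k^*)^n-1$ measures the failure of $h_k^*$ to be exactly balanced. By Proposition \ref{estiprop} and Corollary \ref{conver} one has $\|\lambda_k\|_{C^0}=O(k^{-2})$. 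For $a_A$ I would use the pointwise estimate $|a_A(x)|\le\|A\|_{\mathrm{op}}\sum_\gamma|\tau_\gamma^{(k)}(x)|^2_{h_k^*}\le 2\|A\|_{\mathrm{op}}$ for large $k$ by \eqref{esti2}, so that $\int_X|a_A|(\omega_k^*)^n\le 2\|A\|_{\mathrm{op}}\,Vk^n$. Combining the two bounds gives $|g'(0)|\le C_1k^{-2}\cdot\|A\|_{\mathrm{op}}\cdot Vk^n$.

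The main obstacle is the a priori bound $\|A\|_{\mathrm{op}}=O(k)$; this is the point where the possibly large displacement between $h$ and $h_\infty$ must be absorbed. I would obtain it by comparing $\mbox{Hilb}(h_k)$ and $H_k^*=\mbox{Hilb}(h_k^*)$ through Rayleigh quotients: for $s\in H^0(X,L^k)$ the ratio $\mbox{Hilb}(h_k)(s,s)/H_k^*(s,s)$ equals $\int|s|^2_{h_k}c_1(L^k,h_k)^n/\int|s|^2_{h_k^*}(\omega_k^*)^n$, which lies in $[e^{-Ck},e^{Ck}]$ because $\log(h_k/h_k^*)=k\log(h/h_\infty)+O(k^{-1})=O(k)$ on the compact manifold $X$ (Corollary \ref{conver} identifies $h_k$ and $h_k^*$ with $h^{\otimes k}$ and $h_\infty^{\otimes k}$ up to lower order), while the volume forms $c_1(L^k,h_k)^n$ and $(\omega_k^*)^n$ are uniformly comparable. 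Hence the spread of the relative eigenvalues is $O(k)$, and since $A$ is traceless its eigenvalues are $O(k)$, giving $\|A\|_{\mathrm{op}}=O(k)$. Substituting yields $|g'(0)|\le Ck^{n-1}$, so $g'(0)\ge -Ck^{n-1}$, and the lemma follows with $C$ controlled by $\|\log(h/h_\infty)\|_{C^0}$ and the almost-balanced constant of Proposition \ref{estiprop}, hence depending only on $h$ and $h_\infty$. The numerology is exact: the gain $k^{-2}$ from almost-balancedness and the loss $k$ from the displacement $A$ combine against the volume factor $k^n$ to leave precisely $k^{n-1}$, i.e. $k^{-1}$ after dividing by $Vk^n$.
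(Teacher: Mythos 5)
Your proposal is correct and follows essentially the same route as the paper's own proof: both convert the difference into Chow-norm values via Proposition \ref{chownorm} after an $SL(N_k,\mathbb{C})$-normalization using scaling invariance, apply convexity (Proposition \ref{conv2}) along the geodesic $e^{tA}$ to reduce to a first-derivative bound, exploit the traceless cancellation $\int_X a_A\,(\omega_k^*)^n=\frac{Vk^n}{N_k}\mathrm{tr}(A)=0$ together with the almost-balanced estimates of Proposition \ref{estiprop} to gain the factor $k^{-2}$, and bound the displacement by $O(k)$ using the $C^\infty$-comparability of $h_k$ with $h^{\otimes k}$ and $h_k^*$ with $h_\infty^{\otimes k}$ from Corollary \ref{conver}, yielding the same numerology $k^{-2}\cdot k\cdot k^n/(Vk^n)=k^{-1}$. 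The only cosmetic differences are that the paper first diagonalizes the relative position by an $SU(N_k)$ rotation (so the geodesic direction is the diagonal matrix $\hat\Lambda$ with entries $\hat\lambda_\alpha^{(k)}$) and quotes the derivative formula for the log Chow norm directly, whereas you keep a general traceless Hermitian $A$ and obtain the derivative by differentiating $\tilde P_k$ itself.
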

\begin{proof}
Recall that $H_k^*=\mbox{\upshape Hilb}(h_{k}^*)$ and
$\{\tau_{\alpha}^{(k)}; 1\le\alpha\le N_k\}$ is an orthonormal basis
of $H_k^*$ (See \ref{hkstar}). Let $H_k=\mbox{\upshape Hilb}(h_{k})$
and $\{s_\alpha^{(k)}; 1\le\alpha\le N_k\}$ be an orthonormal basis
of $H_k$. Transforming by a matrix in $SU(N_k)$, we can assume
\[
s_\alpha^{(k)}=e^{\lambda^{(k)}_\alpha}\tau_{\alpha}^{(k)}
\]
\begin{equation}\label{lambda}
e^{-2\lambda_\alpha^{(k)}}=\frac{N_k}{Vk^n}\int_{X}|\tau_{\alpha}^{(k)}|_{h_{k}}^2\omega_{k}^{n}
\end{equation}
Since by Corollary \ref{conver} we have the following uniform
convergence in $C^{\infty}$: $(h_k)^{\frac{1}{k}}\rightarrow h$,
$\frac{1}{k}\omega_k\rightarrow\omega$,
$(h_k^*)^{\frac{1}{k}}\rightarrow h_\infty$,
$\frac{1}{k}\omega_k^*\rightarrow\omega_\infty$, There exists a
constant $C_1>0, C_2>0$, depending only on $h$ and $h_\infty$, such
that $C_1^{-k}\le\frac{h_k}{h_k^*}\le C_1^k$,
$C_2^{-1}\omega_k^*\le\omega_k\le C_2\omega_k^*$, so we see from
\eqref{lambda} that $ |\lambda_\alpha^{(k)}|\le Ck $.

Let
$\underline{\lambda}=\frac{1}{N_k}\sum_{\beta=1}^{N_k}\lambda_\beta^{(k)}$,
$H_k'=e^{2\underline{\lambda}}H_k$,
$\hat{\lambda}_\alpha^{(k)}=\lambda_\alpha^{(k)}-\underline{\lambda}$.
Then
$\{\hat{s}_\alpha^{(k)}=e^{\hat{\lambda}_{\alpha}^{(k)}}\tau_{\alpha}^{(k)}\}$
is an orthonormal basis of $H_k'$. Note that
$\hat{\lambda}_\alpha^{(k)}$ satisfies the same estimate as
$\lambda_\alpha^{(k)}$:
\begin{equation}\label{estilam2}
|\hat{\lambda}_\alpha^{(k)}|\le Ck
\end{equation}
$\hat{\Lambda}_{\alpha\beta}=\hat{\lambda}_\alpha^{(k)}\delta_{\alpha\beta}$
is a diagonal matrix in $SL(N_k,\mathbb{C})$. By scaling invariant
of $\tilde{P}_k$ and proposition \ref{chownorm}, we have
\[
\tilde{P}_k(\mbox{\upshape
FS}(H_{k}),H_{k})=\tilde{P}_k(\mbox{\upshape
FS}(H'_k),H'_k)=\frac{1}{Vk^n}\log\|\hat{X}_{k}(H'_k)\|_{\mbox{\upshape
\scriptsize CH}(H_{k}^{*})}^2
\]
\[
\tilde{P}_k(\mbox{\upshape
FS}(H^*_k),H^*_k)=\frac{1}{Vk^n}\log\|\hat{X}_{k}(H_k^*)\|_{\mbox{\upshape
\scriptsize CH}(H_{k}^{*})}^2
\]
As in Section \ref{aubinchow}, let
\[
X_k(s)=e^{s\hat{\Lambda}}\cdot X_k(H_k^*)
\]
\[
f_k(s)=\log\|\hat{X}_k(s)\|^2_{\mbox{\upshape \scriptsize
CH}(H_k^*)}
\]
By Proposition \ref{conv2}, $f_k(s)$ is a convex function of $s$, so
\[
f_k(1)-f_k(0)\ge f'(0)
\]
We can estimate $f_k'(0)$ by the estimates in Proposition
\ref{estiprop}:
\begin{eqnarray*}
f_k'(0)&=&\int_{X}\frac{\sum_{\alpha}\hat{\lambda}_\alpha^{(k)}|\tau_{\alpha}^{(k)}|^2}{\sum_\alpha|\tau_{\alpha}^{(k)}|^2}\left(\frac{\sqrt{-1}}{2\pi}\partial\bar{\partial}
\log\sum_{\alpha=1}^{N_k}|\tau_{\alpha}^{(k)}|^2\right)^{n}\\
&=&\int_X\frac{\sum_{\alpha}\hat{\lambda}_\alpha^{(k)}|\tau_{\alpha}^{(k)}|^2}{1+O(k^{-2})}(1+O(k^{-2}))\omega_{k}^{*n}\\
&=&\int_XO(k^{-2})(\sum_{\alpha=1}^{N_k}\hat{\lambda}_\alpha^{(k)}|\tau_{\alpha}^{(k)}|^2)\omega_k^{*n}
\end{eqnarray*}
where the last equality is because of
\[
\int_{X}\sum_{\alpha=1}^{N_k}\hat{\lambda}^{(k)}_\alpha|\tau_{\alpha}^{(k)}|^2_{h_k^*}\omega_k^{*n}=\frac{Vk^n}{N_k}\sum_{\alpha=1}^{N_k}\hat{\lambda}^{(k)}_\alpha=0
\]
By the estimate for $\hat{\lambda}_\alpha^{(k)}$ \eqref{estilam2},
we get
\[
|f_k'(0)|\le C k^{-2} k N_k\le Ck^{n-1}
\]
So $f(1)-f(0)\ge f_k'(0)\ge -Ck^{n-1}$, and
\[\frac{1}{Vk^n}(\log\|\hat{X}_k(H_k)\|_{\mbox{\upshape \scriptsize CH}}^2-\frac{1}{Vk^n}\log\|\hat{X}_{k}(H_k^*)\|_{\mbox{\upshape \scriptsize CH}}^2)
=\frac{1}{Vk^n}(f_k(1)-f_k(0))\ge -C\frac{1}{Vk^n}k^{n-1}\ge
-Ck^{-1}
\]
\end{proof}
\begin{rem}\label{remMa}
The proof of this lemma is similar to the argument in the beginning
part of (\cite{Ma5}, Section 5) where Mabuchi proved semi-stability
of varieties with constant scalar curvature metrics. Roughly
speaking, here we consider geodesic segment connecting $H_k^*$ and
$H_k$ in $\mathcal{H}_k$, while Mabuchi (\cite{Ma5}, Section 5)
considered geodesic ray in $\mathcal{H}_k$ defined by a test
configuration. The estimates in Proposition \ref{estiprop} show
that, to prove the semi-stability as in Mabuchi's argument
(\cite{Ma5}, Section 5), we only need Bergman metrics of $h_\infty$
instead of Mabuchi's T-balanced metrics.
\end{rem}
\begin{rem}
In (\cite{Do2}, Corollary 2), $H_k^*$ is taken to be balance metric,
i.e. $H_k^*$ is a fixed point of the mapping $\mbox{\upshape
Hilb}(\mbox{\upshape FS}(\cdot))$. Then the difference in the Lemma
\ref{step2} is nonnegative, instead of bounded below by error term
$-Ck^{-1}$. Similar remark applies to the following lemma: the
difference in Lemma \ref{step3} equals to 0 for balanced metric.
\end{rem}
\begin{lem}\label{step3}
There exists a constant $C>0$, which only depends on $h_\infty$,
such that
\[\left|\tilde{P}_k(\mbox{\upshape FS}(H_k^*),H_k^*)-\tilde{P}_k(h_k^*,\mbox{\upshape Hilb}(h_k^*))\right|\le Ck^{-2}\]
\end{lem}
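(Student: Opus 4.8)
The plan is to reduce the difference of the two $\tilde{P}_k$ values to a multiple of the gap between the Aubin--Yau functional evaluated at $h_k^*$ and at its own Bergman metric $h_k^{**}=\mbox{FS}(H_k^*)$, and then to bound that gap using the almost-balanced estimate of Proposition \ref{estiprop} together with the convexity Lemma \ref{conv1}.

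First I would unwind the definitions and observe that the two $\log\det$ contributions both vanish. In both $\tilde{P}_k(\mbox{FS}(H_k^*),H_k^*)$ and $\tilde{P}_k(h_k^*,\mbox{Hilb}(h_k^*))$ the Hermitian form on $H^0(X,L^k)$ is $H_k^*$, since $\mbox{Hilb}(h_k^*)=H_k^*$ by \ref{hkstar}. By definition $\det H_k$ is the determinant of the Gram matrix, computed against the reference $H_k^*$, of an $H_k$-orthonormal basis; here this basis is the $H_k^*$-orthonormal basis $\{\tau_\alpha^{(k)}\}$, so the Gram matrix is the identity and $\log\det H_k^*=0$. Consequently
\[
\tilde{P}_k(\mbox{FS}(H_k^*),H_k^*)-\tilde{P}_k(h_k^*,\mbox{Hilb}(h_k^*))=\frac{1}{Vk^n}\bigl(I_k(h_k^*)-I_k(h_k^{**})\bigr).
\]

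Next I would write $h_k^{**}=h_k^*e^{-\psi_k}$ and identify the potential $\psi_k$. Applying the FS formula with the orthonormal basis $\{\tau_\alpha^{(k)}\}$ of $H_k^*$ gives $h_k^{**}/h_k^*=\left(\sum_\alpha|\tau_\alpha^{(k)}|^2_{h_k^*}\right)^{-1}$, so $\psi_k=\log\left(\sum_\alpha|\tau_\alpha^{(k)}|^2_{h_k^*}\right)$. This is precisely the quantity controlled by Proposition \ref{estiprop}: from $\sum_\alpha|\tau_\alpha^{(k)}|^2_{h_k^*}=1+O(k^{-2})$ in $C^r$ we obtain $\|\psi_k\|_{C^0}\le Ck^{-2}$.

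Finally, to estimate $I_k(h_k^*)-I_k(h_k^{**})$ I would apply the convexity Lemma \ref{conv1} with $h_k=h_k^*$ and $\phi=\psi_k$, which sandwiches $I_k(h_k^{**})-I_k(h_k^*)$ between $-\int_X\psi_k\,\omega_k^{*n}$ and $-\int_X\psi_k\,(\omega_k^{**})^n$, where $\omega_k^{**}=c_1(L^k,h_k^{**})$. Both forms represent $kc_1(L)$, so $\int_X\omega_k^{*n}=\int_X(\omega_k^{**})^n=Vk^n$; together with $\|\psi_k\|_{C^0}\le Ck^{-2}$ each integral is bounded in absolute value by $CVk^{n-2}$, and dividing by $Vk^n$ yields the claimed $Ck^{-2}$ bound. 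The argument is essentially routine once the reduction is in place; the only point requiring care is recognizing that both $\log\det$ terms drop out, so that the entire Chow/determinant contribution disappears and the remaining difference is exactly the variation of the Aubin--Yau functional under the small potential $\psi_k$, whose $O(k^{-2})$ size is supplied by Proposition \ref{estiprop}.
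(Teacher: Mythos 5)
Your proof is correct and takes essentially the same route as the paper's: you cancel the $\log\det$ contributions (both are taken with respect to $H_k^*$, hence vanish), identify the potential $\psi_k=\log\bigl(\sum_\alpha|\tau_\alpha^{(k)}|^2_{h_k^*}\bigr)$ via the FS formula, bound it by $O(k^{-2})$ using Proposition \ref{estiprop}, and convert this into the bound on $\frac{1}{Vk^n}\bigl(I_k(h_k^*)-I_k(h_k^{**})\bigr)$ via Lemma \ref{conv1}. The only difference is cosmetic: you spell out the sandwich inequality and the volume normalization $\int_X c_1(L^k,\cdot)^n=Vk^n$ that the paper leaves implicit in its final appeal to Lemma \ref{conv1}.
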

\begin{proof}
Recall that \ref{hkstar}
\[
H_k^*=\mbox{\upshape Hilb}(h_k^*),\quad h_k^{**}=\mbox{\upshape
FS}(H_k^*)=\mbox{\upshape FS}(\mbox{\upshape Hilb}(h_k^*))
\]
It's easy to see that
\begin{equation*}
\tilde{P}_k(\mbox{\upshape
FS}(H_k^*),H_k^*)-\tilde{P}_k(h_k^*,\mbox{\upshape
Hilb}(h_k^*))=\frac{1}{Vk^n}(I_k(h_k^*)-I_k(h_k^{**}))
\end{equation*}
For any section $s$ of $L^k$, $
|s|_{h_k^{**}}^2=\frac{|s|_{h_k^*}^2}{\sum_\alpha|\tau_{\alpha}^{(k)}|_{h_k^*}^2}
$. So
\[
\frac{h_k^{*}}{h_k^{**}}=\sum_{\alpha=1}^{N_k}|\tau_{\alpha}^{(k)}|_{h_k^*}^2 
\]
By proposition \ref{estiprop}.
$\left|\log\frac{h_k^{**}}{h_k^{*}}\right|=|\log(1+O(k^{-2}))|=O(k^{-2})$,
So by Lemma \ref{conv1}, we get
\[
\left|\frac{1}{Vk^n}(I_k(h_k^*)-I_k(h_k^{**}))\right|\le Ck^{-2}
\]
\end{proof}
Take any K\"{a}hler metric
$\omega_\phi=c_1(L,h)+\frac{\sqrt{-1}}{2\pi}\partial\bar{\partial}\phi\in
[\omega]$. Let $h_k(\phi)=h^{\otimes k}e^{-k\phi}$. Define
\[
\mathcal{L}_k(\omega_\phi)=\tilde{P}_k(h_k(\phi),\mbox{\upshape
Hilb}(h_k(\phi)))
\]
\begin{lem}\label{approx}
There exist constants $\mu_k$, such that
\[\mathcal{L}_k(\omega_\phi)+\mu_k=\frac{1}{2}\nu_\omega(\omega_\phi)+O(k^{-1})\]
Here $O(k^{-1})$ depend on $\omega$ and $\omega_\phi$.
\end{lem}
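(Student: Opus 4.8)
The plan is to establish the identity by differentiating both sides along a path and matching the $t$-derivatives to order $O(k^{-1})$. Fix the path $\phi_t=t\phi$ joining $0$ to $\phi$ in $\mathcal{K}$, write $h_k(\phi_t)=h^{\otimes k}e^{-k\phi_t}=(he^{-\phi_t})^{\otimes k}$ and $\omega_{\phi_t}=c_1(L,he^{-\phi_t})$, so that $c_1(L^k,h_k(\phi_t))=k\omega_{\phi_t}$ and hence $c_1(L^k,h_k(\phi_t))^n=k^n\omega_{\phi_t}^n$. Since the K-energy is path independent with $\frac{d}{dt}\nu_\omega(\omega_{\phi_t})=-\frac{1}{V}\int_X(S(\omega_{\phi_t})-\underline{S})\frac{d\phi_t}{dt}\,\omega_{\phi_t}^n$ and $\nu_\omega(\omega)=0$, it suffices to prove
\[
\frac{d}{dt}\mathcal{L}_k(\omega_{\phi_t})=-\frac{1}{2V}\int_X(S(\omega_{\phi_t})-\underline{S})\frac{d\phi_t}{dt}\,\omega_{\phi_t}^n+O(k^{-1}),
\]
with the error uniform in $t\in[0,1]$; integrating in $t$ and setting $\mu_k=-\mathcal{L}_k(\omega)$ then yields the claim.

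The $I_k$ term is immediate from the definition of the Aubin--Yau functional: differentiating along $\phi_t$ gives $\frac{d}{dt}I_k(h_k(\phi_t))=-k\int_X\frac{d\phi_t}{dt}\,c_1(L^k,h_k(\phi_t))^n=-k^{n+1}\int_X\frac{d\phi_t}{dt}\,\omega_{\phi_t}^n$, so $\frac{d}{dt}\bigl(-\frac{1}{Vk^n}I_k(h_k(\phi_t))\bigr)=\frac{k}{V}\int_X\frac{d\phi_t}{dt}\,\omega_{\phi_t}^n$. For the determinant term, fix $t$-independent sections $\{s_\alpha\}$ and differentiate $(H_k(t))_{\alpha\beta}=\frac{N_k}{Vk^n}\int_X\langle s_\alpha,s_\beta\rangle_{h_k(\phi_t)}\,c_1(L^k,h_k(\phi_t))^n$. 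There are two contributions: the pointwise variation of the fibre metric, $\frac{d}{dt}\log h_k(\phi_t)=-k\frac{d\phi_t}{dt}$, and the variation of the volume form, $\frac{d}{dt}\omega_{\phi_t}^n=(\triangle_{\phi_t}\frac{d\phi_t}{dt})\,\omega_{\phi_t}^n$, where $\triangle_{\phi_t}$ is the Laplacian of $\omega_{\phi_t}$. Evaluating $\frac{d}{dt}\log\det H_k(t)=\mathrm{tr}(H_k(t)^{-1}\dot H_k(t))$ in an $H_k(t)$-orthonormal basis produces the sum $\sum_\alpha|s_\alpha|^2_{h_k(\phi_t)}$, which is precisely the (unnormalized) Bergman kernel of $\omega_{\phi_t}$. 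Thus the whole determinant derivative is expressible through $\rho_k(\omega_{\phi_t})$, and I insert the expansion $\rho_k(\omega_{\phi_t})=1+\frac{1}{2}S(\omega_{\phi_t})k^{-1}+O(k^{-2})$ of Proposition \ref{bergexp} together with $N_k=\frac{Vk^n}{n!}(1+\frac{1}{2}\underline{S}k^{-1}+O(k^{-2}))$, the latter obtained by integrating that same expansion.

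Carrying out the expansion, the leading part of $\frac{1}{N_k}\frac{d}{dt}\log\det H_k(t)$ is $-\frac{k}{V}\int_X\frac{d\phi_t}{dt}\,\omega_{\phi_t}^n$, the Laplacian contribution integrates to zero against the constant term $A_0=1$, and the surviving $O(1)$ remainder, after combining the subleading term of $\rho_k$ with the $O(k^{-1})$ correction of $N_k$, is exactly $-\frac{1}{2V}\int_X\frac{d\phi_t}{dt}(S(\omega_{\phi_t})-\underline{S})\,\omega_{\phi_t}^n$. Adding the $I_k$ derivative, the two $O(k)$ terms cancel identically, leaving the desired order-one expression plus $O(k^{-1})$. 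The main obstacle is this determinant variation: one must correctly separate the fibre-metric and volume-form variations, recognize the Bergman kernel inside the trace, and track the exact cancellation of the divergent $O(k)$ terms against the $I_k$ contribution. The remaining point is uniformity of the error in $t$: since $\{\omega_{\phi_t}\}_{t\in[0,1]}$ is a compact family of metrics with bounded geometry, Proposition \ref{bergexp}.(3) supplies a single constant governing all the expansions, so the $O(k^{-1})$ bound persists after integration over $[0,1]$, with the implied constant depending only on $\omega$ and $\omega_\phi$.
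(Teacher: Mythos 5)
Your proposal is correct and follows essentially the same route as the paper: differentiate $\mathcal{L}_k$ along the linear path $\phi_t=t\phi$, split into the $\log\det$ and $I_k$ contributions, recognize the Bergman kernel in the trace, insert the expansions of Proposition \ref{bergexp} (for both $\rho_k$ and $N_k$) to cancel the $O(k)$ terms and isolate $-\frac{1}{2V}\int_X(S(\omega_{\phi_t})-\underline{S})\phi\,\omega_{\phi_t}^n$, and invoke the uniformity statement \ref{bergexp}.(3) for the bounded-geometry family $\{\omega_{\phi_t}\}$ before integrating in $t$. Your write-up is in fact slightly more explicit than the paper's on two points it leaves implicit: the identification $\mu_k=-\mathcal{L}_k(\omega)$ and the bookkeeping of which subleading terms produce the $S-\underline{S}$ combination.
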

\begin{proof}
Let $\phi(t)=t\phi\in\mathcal{K}$, $h_k(t)=h_k e^{-tk\phi}$,
$\omega_{\phi_t}=\omega+t\frac{\sqrt{-1}}{2\pi}\partial\bar{\partial}\phi$,
$\triangle_t$ be the Laplace operator of metric $\omega_{\phi_t}$.
Plugging in expansions for Bergman kernels $\rho_k$ in Proposition
\ref{bergexp}, we get
\begin{eqnarray*}
\frac{d}{dt}\tilde{P}_k(h_k(t),\mbox{\upshape
Hilb}(h_k(t)))&=&\frac{1}{N_kn!}\int_X\frac{N_kn!}{Vk^n}\sum_{\alpha}|s_\alpha^{(k)}|^2_{h_k}(-k\phi+\triangle_t\phi)k^n\omega_\phi^n+\frac{1}{Vk^n}\int_Xk
\phi k^n\omega_\phi^n\\
&=&\frac{1}{V}\frac{k^n}{k^n+\frac{1}{2}\underline{S}\,k^{n-1}+\cdots}\int_X(-k\rho_k+\triangle_t\rho_k)\phi\omega_\phi^n+\frac{k}{V}\int_X\phi\omega_\phi^n\\
&=&-\frac{1}{2V}\int_X(S(\omega_{\phi_t})-\underline{S})\phi\omega_\phi^n+O(k^{-1})
\end{eqnarray*}
$\{\omega_{\phi_t},0\le t\le1\}$ have uniformly bounded geometry, so
by Proposition \ref{bergexp}.(3), the expansions above are uniform.
So the Lemma follows after integrating the above equation.
\end{proof}
\begin{proof}{[Proof of Theorem \ref{main}]}
By Lemma \ref{step1}, Lemma \ref{step2}, Lemma \ref{step3}
\begin{eqnarray*}
\tilde{P}_k(h_k,\mbox{\upshape Hilb}(h_k))&\ge&
\tilde{P}_k(\mbox{\upshape FS}(\mbox{\upshape
Hilb}(h_k)),\mbox{\upshape Hilb}(h_k))\\
&\ge&\tilde{P}_k(\mbox{\upshape FS}(\mbox{\upshape
Hilb}(h_k^*)),\mbox{\upshape Hilb}(h_k^*))+O(k^{-1})\\
&=&\tilde{P}_k(h_k^*,\mbox{\upshape Hilb}(h_k^*))+O(k^{-1})
\end{eqnarray*}
So by Lemma \ref{approx}
\begin{eqnarray*}
\nu_\omega(\omega_\phi)&=&2\mathcal{L}_k(\omega_\phi)+2\mu_k+O(k^{-1})=2\tilde{P}_k(h_k,\mbox{\upshape
Hilb}(h_k))+2\mu_k+O(k^{-1})\\
&\ge&2\tilde{P}_k(h_k^*,\mbox{\upshape
Hilb}(h_k^*))+2\mu_k+O(k^{-1})=2\mathcal{L}_k\left(\frac{1}{k}\omega_k^*\right)+2\mu_k+O(k^{-1})\\
&=&\nu_\omega\left(\frac{1}{k}\omega_k^*\right)+O(k^{-1})\\
&=&\nu_\omega(\omega_\infty)+O(k^{-1})
\end{eqnarray*}
The last line is because
$\frac{1}{k}\omega_k^*\rightarrow\omega_\infty$ in $C^{\infty}$. The
Theorem follows by letting $k\rightarrow+\infty$.
\end{proof}
\end{section}

\end{document}